\documentclass[12pt, reqno, a4paper]{amsart}

\usepackage{ amssymb, amsmath, enumerate, amsfonts, amsthm, mathrsfs, url, bm, mathtools}

\setlength{\parindent}{1em}

\usepackage{xcolor}  	
\usepackage[backref]{hyperref}
\hypersetup{
	colorlinks,
    linkcolor={blue!60!black},
    citecolor={blue!60!black},
    urlcolor={red!60!black}
}

\usepackage{color}

\marginparwidth 1.25cm

\numberwithin{equation}{section}

 \newcommand{\comment}[1]{}  

\usepackage{ amssymb, amsmath, enumerate, amsfonts, amsthm, mathrsfs, url, bm}




\numberwithin{equation}{section}

\setlength{\parindent}{1em}

\usepackage{xcolor}  	
\usepackage[backref]{hyperref}
\hypersetup{
	colorlinks,
    linkcolor={blue!60!black},
    citecolor={blue!60!black},
    urlcolor={red!60!black}
}

\usepackage{color}

\usepackage[margin=1.5in]{geometry}

\RequirePackage{doi}

\usepackage[square,sort,comma,numbers]{natbib}
\setlength{\bibsep}{0.0pt}
%

\newcommand{\C}{\mathbb{C}}

\newcommand{\Z}{\mathbb{Z}}

\newcommand{\R}{\mathbb{R}}

\newcommand{\T}{\mathbb{T}}


\theoremstyle{claim}

 \newcommand{\set}[1]{\left\{#1\right\}}

\newcommand{\abs}[1]{\left| #1\right|}
\newcommand{\bigabs}[1]{\bigl| #1 \bigr|}

\newcommand{\sqbrac}[1]{\left[ #1 \right]}

\newcommand{\ceil}[1]{\left\lceil #1 \right\rceil}

\newcommand{\brac}[1]{\left( #1 \right)}

\newcommand{\Bigbrac}[1]{\Bigl( #1 \Bigr)}
\newcommand{\biggbrac}[1]{\biggl( #1 \biggr)}

\newcommand{\norm}[1]{\left\| #1\right\|}
\newcommand{\bignorm}[1]{\big\| #1 \big\|}

\newcommand{\recip}[1]{\frac{1}{#1}}
\newcommand{\trecip}[1]{\tfrac{1}{#1}}

\newcommand{\intd}{\mathrm{d}}
\newcommand{\supp}{\mathrm{supp}}

\newcommand{\eps}{\varepsilon}

\makeatletter
\let\@@pmod\pmod
\DeclareRobustCommand{\pmod}{\@ifstar\@pmods\@@pmod}
\def\@pmods#1{\mkern4mu({\operator@font mod}\mkern 6mu#1)}
\makeatother

\newcommand{\Spec}{\mathrm{Spec}}


\newtheorem{theorem}{Theorem}[section]

\newtheorem{corollary}[theorem]{Corollary}

\newtheorem{lemma}[theorem]{Lemma}

\theoremstyle{definition}
\newtheorem{definition}[theorem]{Definition}

\numberwithin{theorem}{section}

\renewcommand{\leq}{\leqslant}
\renewcommand{\geq}{\geqslant}

\renewcommand{\ll}{\lesssim}
\renewcommand{\gg}{\gtrsim}

\begin{document}

\title{Solving equations in dense Sidon sets}

\author{Sean Prendiville}
\address{Department of Mathematics and Statistics\\
Lancaster University\\
UK}
\email{s.prendiville@lancaster.ac.uk}




\begin{abstract}
We offer an alternative proof of a  result of Conlon, Fox, Sudakov and Zhao \cite{CFSZRegularity} on solving translation-invariant linear equations in dense Sidon sets. Our proof  generalises to equations in more than five variables and  yields effective bounds.
\end{abstract}

\maketitle

\setcounter{tocdepth}{1}
\tableofcontents
\section{Introduction}

A set $S$ of integers is a \emph{Sidon} set if the only solutions to the equation 
\begin{equation}\label{energy equation}
x-x' = y-y' \quad (x,x', y, y' \in S)
\end{equation} 
are trivial, in the sense that $x=y$ or $x= x'$.  Writing $E(S)$ for the number of tuples $(x,x', y, y')$ solving \eqref{energy equation}, a finite set $S$ is Sidon if and only if $E(S) \leq 2|S|^2-|S|$. 

One can show that if $S\subset [N]$ is Sidon then $|S| \leq (1+o(1))N^{1/2}$, and there are constructions with $|S| \geq (1-o(1))N^{1/2}$, see \cite{OBryantComplete}. Conlon, Fox, Sudakov and Zhao \cite{CFSZRegularity} have shown that Sidon sets whose cardinality is within a constant of this range possess arithmetic structure, in that they contain a solution to any translation-invariant linear equation in five variables, with all variables distinct. Furthermore they are able to demonstrate that this structure is also possessed by \emph{almost Sidon} sets, that is sets for which 
\[
E(S) \leq (2 + o(1))|S|^{2}.
\] 
Their results are deduced using a regularity lemma for graphs with few 4-cycles. We use a Fourier-analytic transference principle developed by Helfgott and de Roton \cite{HelfgottDeRotonImproving} to give an alternative proof of this result, generalising to translation-invariant equations in more variables and extracting  bounds.

\begin{theorem}\label{dense almost sidon}
Let $a_1, \dots, a_s \in \Z\setminus\set{0}$ with $a_1 + \dots + a_s = 0$ and $s \geq 5$.  Given $0 < \delta \leq 1$, suppose that $S \subset [N]$ satisfies
\[
|S| \geq \delta N^{1/2} \quad \text{and} \quad E(S) \leq \brac{2 + \eta} |S|^2.
\] 
Then either 
\[
N \leq \exp\exp(O_{a_i}(1/\delta)), \quad \text{or} \quad \eta \geq \exp\brac{-\exp(O_{a_i}(1/\delta))}
\]
or
\begin{equation}\label{solution lower bound}
\sum_{a_1x_1 + \dots + a_s x_s = 0} \prod_i 1_S(x_i) \geq \exp \brac{-O_{a_i}\brac{1/\delta}} N^{\frac{s}{2} - 1}.
\end{equation}
\end{theorem}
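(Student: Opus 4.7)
The plan is to apply a Fourier-analytic density-increment argument modeled on the Helfgott--de Roton framework. First embed $S$ into $\Z/N'\Z$ for some prime $N' \asymp s(\max_i|a_i|)N$ to prevent cyclic wraparound, and consider the Fourier decomposition
\[
T = \frac{1}{N'} \sum_{\xi \in \Z/N'\Z} \prod_{i=1}^s \widehat{1_S}(a_i\xi), \qquad \widehat{1_S}(\xi) := \sum_x 1_S(x) e_{N'}(\xi x).
\]
The $\xi = 0$ term contributes $|S|^s/N' \asymp \delta^s N^{s/2-1}$, already of the target order of magnitude. The task is to show this dominates the non-zero-frequency contribution, possibly after iteratively restricting $S$ to a Bohr set.

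The core dichotomy is at a Fourier threshold $\sigma \asymp \delta^{2/(s-4)}$. In the uniform case, where $\max_{\xi \neq 0} |\widehat{1_S}(\xi)| \leq \sigma|S|$, H\"older's inequality with four factors in $L^4$ --- absorbed by the almost-Sidon identity $\|\widehat{1_S}\|_4^4 = N' E(S) \leq (2+\eta) N'|S|^2$ --- and the remaining $s - 4 \geq 1$ factors in $L^\infty$ bounded by $\sigma|S|$, yields an error of size $O(\sigma^{s-4}(2+\eta)|S|^{s-2})$, strictly smaller than the main term for the chosen $\sigma$ and $\eta$ below threshold. This is precisely where $s \geq 5$ enters: one needs at least one Fourier factor to exploit the $L^\infty$ smallness beyond the four factors consumed by the almost-Sidon bound. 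We conclude $T \gg \delta^s N^{s/2-1}$, even stronger than claimed.

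In the structured case, a large Fourier coefficient $|\widehat{1_S}(\xi_0)| > \sigma|S|$ at some $\xi_0 \neq 0$ signals correlation with a linear phase. The Helfgott--de Roton machinery (pigeonhole on translates of a regular Bohr set, aided by Chang-type bounds on the large spectrum, whose size the $L^4$ bound keeps at $O(\sigma^{-4}\delta^{-2})$) produces a Bohr set $B$ of bounded dimension on which $|S \cap B|/|B| \geq (1 + c\sigma) |S|/N'$. One iterates on $S \cap B$ within the ambient Bohr set, using the transfer $E(S \cap B) \leq E(S) \leq (2+\eta)|S|^2$ to retain the almost-Sidon hypothesis with a controlled loss in $\eta$. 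Each iteration increments the Sidon density by a definite amount $c(\delta)$, shrinks the ambient group by an exponential factor in $1/\delta$, and inflates $\eta$ by a bounded multiplicative constant. After $O(1/\delta)$ iterations the uniform case applies on the innermost restriction, producing the claimed bound with total loss $\exp(-O(1/\delta))$; the two other alternatives in the statement correspond to premature termination when $N$ becomes too small or $\eta$ too large.

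The principal obstacle is the bookkeeping of Sidon density under Bohr-set restriction: the natural normalization $(\textrm{ambient size})^{1/2}$ shrinks on passage to $B$, so a naive relative density increment need not produce a gain in the Sidon parameter $|S \cap B|/|B|^{1/2}$. Overcoming this is the hallmark of the Helfgott--de Roton refinement, which averages over Bohr-set translates and exploits $L^2$ properties of the Bohr cutoff to extract a genuine Sidon-density increment while keeping the ambient cardinality large enough to sustain the iteration; this same refinement governs the $\exp\exp(O(1/\delta))$ tower appearing in the $N$ and $\eta$ alternatives.
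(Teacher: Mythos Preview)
Your uniform case is essentially correct and mirrors the paper's counting lemma: with $s\geq 5$ one places four Fourier factors in $\ell^4$ (absorbed by $E(S)\leq (2+\eta)|S|^2$) and the remaining $s-4\geq 1$ in $\ell^\infty$, so small Fourier coefficients force the off-diagonal contribution below the main term.

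The gap is in the structured case. You correctly identify the obstacle --- a relative density increment $|S\cap B|/|B|\geq (1+c\sigma)|S|/N'$ does \emph{not} yield a Sidon-density increment $|S\cap B|/|B|^{1/2}\geq (1+c'\sigma)|S|/N'^{1/2}$, since $|B|/N'$ is tiny --- but then assert that ``overcoming this is the hallmark of the Helfgott--de Roton refinement'' and that one can iterate $O(1/\delta)$ times with Sidon density incrementing at each step. This is a mischaracterisation of Helfgott--de Roton: their method is not an iterative density increment at all, and no mechanism is known for converting a relative increment into a Sidon-density increment in this setting. Your additional claim that $E(S\cap B)\leq E(S)\leq (2+\eta)|S|^2$ preserves almost-Sidon with only bounded inflation of $\eta$ also fails, since one needs $E(S\cap B)\leq (2+\eta')|S\cap B|^2$ and $|S\cap B|$ may be far smaller than $|S|$.

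The paper's proof avoids iteration entirely. It builds a \emph{dense model} in one shot: set $f:=N^{1/2}\,1_S*\mu_B$ where $B$ is the Bohr set on the large spectrum of $1_S$. Then $\sum_n f(n)=N^{1/2}|S|\geq \delta N$ automatically, $\|\hat f - N^{1/2}\hat 1_S\|_\infty$ is small by construction of $B$, and --- this is the crux --- the almost-Sidon bound on $r_S$ together with the large-sieve bound on $|B|$ gives $\sum_n f(n)^2\ll N$ directly. One then applies Bloom's dense counting theorem to $f$ (via an $L^2$ variant, passing to the level set $\{f\geq \delta/2\}$), obtaining $\sum f(x_1)\cdots f(x_s)\geq \exp(-O_{a_i}(1/\delta))N^{s-1}$, and transfers back to $N^{1/2}1_S$ by the counting lemma and a telescoping identity. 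The double exponential in the $N$ and $\eta$ alternatives arises from the $\exp(\eps^{-O(1)})$ cost of the Bohr set size bound, with $\eps$ chosen as $\exp(-O_{a_i}(1/\delta))$. No iteration, no Sidon-density increment.
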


\begin{corollary}\label{dense almost sidon bound}
Let $a_1, \dots, a_s \in \Z\setminus\set{0}$ with $a_1 + \dots + a_s = 0$ and $s \geq 5$.  Given $0 < \delta \leq 1$, suppose that $S \subset [N]$ satisfies
\[
|S| \geq \delta N^{1/2} \quad \text{and} \quad E(S) \leq \brac{2 + \eta} |S|^2.
\] 
If $S $ lacks solutions to the equation 
\begin{equation}\label{main equation}
a_1x_1 + \dots + a_s x_s = 0
\end{equation}
with $x_1, \dots, x_s \in S$ all distinct, then 
\[
N \leq \exp\exp(O_{a_i}(1/\delta)) \quad \text{or} \quad \eta \geq \exp\brac{-\exp(O_{a_i}(1/\delta))}.
\]
\end{corollary}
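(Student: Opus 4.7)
The plan is to derive Corollary~\ref{dense almost sidon bound} from Theorem~\ref{dense almost sidon} by bounding the contribution of ``diagonal'' tuples---those with $x_i = x_j$ for some $i \neq j$---and subtracting it from the theorem's lower bound on the total solution count.

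First I would assume the contrapositive of the corollary, namely that $N > \exp\exp(O_{a_i}(1/\delta))$ and $\eta < \exp(-\exp(O_{a_i}(1/\delta)))$, for suitable implicit constants to be fixed at the end. Theorem~\ref{dense almost sidon} then supplies
\[
\sum_{a_1 x_1 + \dots + a_s x_s = 0} \prod_i 1_S(x_i) \geq \exp\bigl(-O_{a_i}(1/\delta)\bigr)\, N^{s/2 - 1},
\]
and it would remain to show that the subsum restricted to tuples with all $x_i$ distinct is strictly positive.

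Next I would estimate the diagonal contribution. Fix a pair $i \neq j$ and impose $x_i = x_j$: if $a_i + a_j \neq 0$ one is left with a linear equation in the $s - 1$ variables $\{x_i\} \cup \{x_k : k \neq i, j\}$, and solving for any one variable gives at most $|S|^{s - 2}$ tuples; if $a_i + a_j = 0$ one obtains a translation-invariant equation in the $s - 2$ variables $\{x_k : k \neq i, j\}$, contributing at most $|S|^{s - 3}$ sub-solutions, multiplied by the $|S|$ free choices of $x_i$, again yielding $|S|^{s - 2}$. Combining this with the almost-Sidon bound $|S| \leq O(\sqrt{N})$---obtained from Cauchy--Schwarz as $|S|^4 \leq (2N - 1) E(S) \leq 3(2N - 1)|S|^2$ for $\eta \leq 1$---and summing over the $\binom{s}{2}$ coincidence patterns, the total diagonal count is $O_{a_i}(N^{s/2 - 1})$.

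Subtracting, the number of tuples with all $x_i$ distinct is at least $\bigl(\exp(-O_{a_i}(1/\delta)) - O_{a_i}(1)\bigr) N^{s/2 - 1}$, contradicting the hypothesis once positive. The main obstacle is that the theorem's lower bound and the diagonal bound carry the same power of $N$, so positivity of the difference reduces to a comparison of prefactors which is tight for small $\delta$. I anticipate handling this by enlarging the implicit constants in the escape clauses of Corollary~\ref{dense almost sidon bound}: for $\delta$ above a threshold depending on the $a_i$ the exponential factor from Theorem~\ref{dense almost sidon} dominates the diagonal constant outright, while for $\delta$ below that threshold the clause $N \leq \exp\exp(O_{a_i}(1/\delta))$ becomes so generous that it is automatically satisfied by any $N$ for which the comparison would otherwise fail, absorbing the remaining cases into the statement.
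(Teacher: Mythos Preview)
Your diagonal estimate is too weak, and the proposed rescue via the escape clauses does not go through. With your counting, the diagonal contribution is $O_{a_i}\bigl(|S|^{s-2}\bigr) = O_{a_i}\bigl(N^{s/2-1}\bigr)$, which carries exactly the same power of $N$ as the main term from Theorem~\ref{dense almost sidon}. The difference is therefore
\[
\Bigl(\exp\bigl(-O_{a_i}(1/\delta)\bigr) - C_{a_i}\Bigr) N^{s/2-1},
\]
and for $\delta$ below a threshold depending only on the $a_i$ this prefactor is negative for \emph{every} $N$. Your suggestion that the clause $N \leq \exp\exp(O_{a_i}(1/\delta))$ can absorb the failure is incorrect: since both sides scale identically in $N$, the comparison fails for arbitrarily large $N$, and no finite upper bound on $N$ covers all of these. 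Enlarging the implicit constant in the escape clause does not help, because the set of bad $N$ is unbounded.

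The paper closes this gap by squeezing an extra power of $N$ out of the almost-Sidon hypothesis when bounding the diagonal. After imposing a coincidence, say $x_{s-1}=x_s$, it freezes $x_4,\dots,x_{s-1}$ (at most $|S|^{s-4}\ll N^{(s-4)/2}$ choices) and then, writing $n=a_4x_4+\dots+a_sx_s$, bounds the number of $(x_1,x_2,x_3)$ with $a_1x_1+a_2x_2+a_3x_3=-n$ by
\[
\int_\T \prod_{i=1}^3 \hat 1_S(a_i\alpha)\,e(\alpha n)\,\intd\alpha
\ \leq\ \prod_{i=1}^3 \norm{\hat 1_S}_4 \ =\ E(S)^{3/4}\ \ll\ N^{3/4}.
\]
This yields a diagonal count of $O_s\bigl(N^{s/2-5/4}\bigr)$, a genuine power saving over $N^{s/2-1}$. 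Now positivity of the difference requires only $N^{1/4}\gg \exp(O_{a_i}(1/\delta))$, which is exactly what the escape clause $N\leq \exp\exp(O_{a_i}(1/\delta))$ is designed to accommodate. The point you are missing is that the energy bound must be used a second time, on the diagonal, not just inside Theorem~\ref{dense almost sidon}.
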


\begin{corollary}\label{dense sidon bound}
If $S\subset [N]$ is a Sidon set lacking  solutions to \eqref{main equation} with distinct variables then, for $N \geq 3$, we have
\begin{equation}\label{sidon density bound}
|S| = O_{a_i}\brac{ N^{1/2}(\log\log N)^{-1}}.
\end{equation}
\end{corollary}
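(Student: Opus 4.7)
The plan is to deduce Corollary \ref{dense sidon bound} as an immediate consequence of Corollary \ref{dense almost sidon bound}. The key observation driving this deduction is that a Sidon set has only trivial solutions to \eqref{energy equation}, giving $E(S) = 2|S|^2 - |S|$; in particular $E(S) \leq 2|S|^2$, so the additive-energy hypothesis of Corollary \ref{dense almost sidon bound} is met with the choice $\eta = 0$.

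Setting $\delta := |S|/N^{1/2}$, so that $|S| = \delta N^{1/2}$, I would invoke Corollary \ref{dense almost sidon bound}. The alternative $\eta \geq \exp(-\exp(O_{a_i}(1/\delta)))$ cannot hold, since its right-hand side is strictly positive while $\eta = 0$. Hence we must be in the other case:
\[
N \leq \exp\exp\brac{O_{a_i}(1/\delta)}.
\]
Taking $\log\log$ of both sides yields $\log \log N \ll_{a_i} 1/\delta$, and substituting $\delta = |S|/N^{1/2}$ rearranges this to $|S| \ll_{a_i} N^{1/2}/\log \log N$, which is the bound \eqref{sidon density bound}.

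Because the real content of the argument is already packaged in Corollary \ref{dense almost sidon bound}, there is essentially no obstacle here beyond routine bookkeeping. One must ensure $\log \log N$ is a positive quantity, which is where the hypothesis $N \geq 3$ enters the picture. One also needs to handle the regime where $\delta$ is so small that Corollary \ref{dense almost sidon bound} yields no useful information; in that regime, however, $|S|$ is already much smaller than $N^{1/2}$, so the claimed bound holds trivially after absorbing the implicit constant depending on the coefficients $a_i$.
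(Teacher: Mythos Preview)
Your proposal is correct and matches the paper's own proof, which is a single sentence: apply Corollary \ref{dense almost sidon bound} with $\eta = 0$, valid because a Sidon set satisfies $E(S) \leq 2|S|^2$. Your additional remarks on the role of $N \geq 3$ and the trivial small-$\delta$ regime are sound bookkeeping that the paper leaves implicit.
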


That such results are obtainable is noted in \cite{CFSZRegularity}, along with a path to proving them. We depart from the use of weak arithmetic regularity suggested therein. Instead our  argument takes advantage of the fact that a Sidon set behaves very nicely with respect to convolution, so that convolving its indicator function with a suitably chosen Bohr set yields a function whose $L^1$ and $L^2$ norms are both comparable to that of a dense set of integers (after appropriate renormalisation). Functions whose $L^p$-norms behave in this manner are similar enough to dense sets of integers for us to import results from the dense setting to  sparse  Sidon sets. This observation originates with Helfgott and de Roton \cite{HelfgottDeRotonImproving}.

An attentive reader will observe that our argument gives a superior exponent of $\log\log N$ than that stated in Corollary \ref{dense sidon bound}. Furthermore the improved exponent grows as the number of variables in \eqref{main equation} increases. This is due to our use of a result of Bloom \cite{BloomTranslation} which counts the number of solutions to a translation-invariant equation in a dense set of integers. For equations in four or more variables, there is a more effective density bound due to Schoen and Sisask \cite{SchoenSisaskRoth}. As is indicated in Sanders' \emph{Mathematical Review}\footnote{\texttt{MR3482282}.} of this paper, one may adapt the argument\footnote{Replace the sum set $A+A$ with a suitable set of popular sums. Thanks to Thomas Bloom and Olof Sisask for pointing this out.} to improve Bloom's counting result.  This then  improves \eqref{sidon density bound} to 
\[
|S| = O_{a_i}\brac{ \frac{N^{1/2}}{\exp\brac{\brac{\log\log N}^{\Omega(1)}}}}.
\] 

The author would be very interested in any proof which yields a polylogarithmic bound in Corollary \ref{dense sidon bound}. For dense sets of integers, all polylogarithmic bounds require some kind of localisation from the interval $[N]$ to a sparser substructure, such as a subprogression or Bohr set.  When dealing with sparse sets of integers like Sidon sets, such localisation is lossy, because the sparse set can be even sparser on the substructure. An example to bear in mind is that a subset of $[N]$ of cardinality $\sqrt{N}$ may intersect each subinterval of length $\sqrt{N}$ in at most one point. The author believes that obtaining a polylogarithmic bound in Corollary \ref{dense sidon bound} may be a  model problem for improving bounds in Roth's theorem in the primes \cite{GreenRoth, HelfgottDeRotonImproving, NaslundImproving}.

\subsection*{Paper Organisation}
We prove Theorem \ref{dense almost sidon} in \S\ref{transference section}, assuming three key lemmas. Proving these lemmas occupies \S\S\ref{bloom section}--\ref{dense model section}. We deduce Corollaries \ref{dense almost sidon bound} and \ref{dense sidon bound} in \S\ref{corollaries section}
\subsection*{Acknowledgements}
The author thanks Jonathan Chapman for corrections, Sam Chow for numerous useful conversations, and Yufei Zhao for an inspiring talk in the (online) Stanford Combinatorics Seminar.
\subsection*{Notation}

\subsubsection*{Standard conventions}
We use $[N]$ to denote the interval of integers $ \{ 1,2, \dots, N\}$.  We use counting measure on $\Z$, so that for $f,g :\Z \to \C$, we have
$$
\norm{f}_{p} := \biggbrac{\sum_x |f(x)|^p}^{\recip{p}}\ \text{and}\ (f*g)(x) := \sum_y f(y)g(x-y).
$$ 
Any sum of the form $\sum_x$ is to be interpreted as a sum over $\Z$. The \emph{support} of $f$ is the set $\supp(f) := \set{x \in \Z : f(x) \neq 0}$. 
 
We use Haar probability measure on $\T := \R/\Z$, so that for integrable $F : \T \to \C$, we have
\begin{equation*}
\norm{F}_{p} := \biggbrac{\int_\T |F(\alpha)|^p\intd\alpha}^{\recip{p}} = \biggbrac{\int_0^1 |F(\alpha)|^p\intd\alpha}^{\recip{p}}
\end{equation*}
and
\begin{equation*}
\norm{F}_{\infty} := \sup_{\alpha \in \T} |F(\alpha)|^.
\end{equation*}
Write $\norm{\alpha}_\T$ for the distance from $\alpha \in \R$ to the nearest integer
$
\min_{n \in \Z} |\alpha - n|.
$
This remains well-defined on $\T$.

\begin{definition}[Fourier transform]\label{fourier transform}
For $f : \Z \to \C$ with finite support define $\hat{f} : \T \to \C$ by
$$
\hat{f}(\alpha) := \sum_{n\in \Z} f(n) e(\alpha  n).
$$
Here $e(\beta)$ stands for $e^{2\pi i \beta}$.
\end{definition}

\subsubsection*{Asymptotic notation}
For a complex-valued function $f$ and positive-valued function $g$, write $f \ll g$ or $f = O(g)$ if there exists a constant $C$ such that $|f(x)| \le C g(x)$ for all $x$. We write $f = \Omega(g)$ if $f \gg g$.  The notation $f\asymp g$ means that $f\ll g$ and $f\gg g$. We subscript these symbols if the implicit constant depends on additional parameters.

 We write $f = o(g)$ if for any $\eps > 0$ there exists $X\in \R$ such that for all $x \geq X$ we have $|f(x)| \leq \eps g(x)$.

\subsubsection*{Local conventions}
As indicated in the introduction, we define the additive energy of a finitely supported function $f : \Z \to \R$ to be the quantity
\[
E(f) := \sum_{x-x'= y-y'} f(x)f(x')f(y)f(y').
\]
When $f =1_S$ is the characteristic function of a finite set $S \subset \Z$ we write $E(S)$.  Notice that 
\[
E(S) = \sum_n r_S(n)^2
\]
where 
\[
r_S(n) := \sum_{n_1 - n_2 = n} 1_S(n_1)1_S(n_2)
\]
is the number of representation of $n$ as a difference of elements of $S$. In the literature this notation is sometimes used for the number of representations as a \emph{sum} of two elements of $S$.

\section{The transference argument}\label{transference section}

In this section we prove Theorem \ref{dense almost sidon} assuming the following three ingredients.

\begin{lemma}[$L^2$--Bloom]\label{l2 bloom}
Let $a_1,\dots,a_s\in \Z \setminus\set{0}$ with $s \geq 5$ and $a_1 + \dots + a_s = 0$. Let $f:  I \to [0, \infty)$ be a function defined on an interval $I \subset \Z$ of length $N$. If $\sum_n f(n) \geq \delta N$ and  $\sum_n f(n)^2 \leq N$ then we have the lower bound
\[
\sum_{a_1x_1 + \dots + a_sx_s = 0}f(x_1)\dotsm f(x_s) \geq  \exp\brac{-O_{a_i}\brac{1/\delta}}N^{s-1}.
\]
\end{lemma}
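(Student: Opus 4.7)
The plan is to reduce to Bloom's counting theorem \cite{BloomTranslation} for dense sets of integers, exploiting the $L^2$-hypothesis as a surrogate for an $L^\infty$-bound on $f$.

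A first, naive approach truncates $f$ at level $M := 2/\delta$. The $L^2$-hypothesis controls the tail via
\[
\sum_{n:\, f(n) > M} f(n) \leq \tfrac{1}{M}\sum_n f(n)^2 \leq \tfrac{N}{M} = \tfrac{\delta N}{2},
\]
so $g := \min(f,M)$ satisfies $g \leq 2/\delta$ and $\sum g \geq \delta N/2$. Rescaling, $h := (\delta/2) g$ is $[0,1]$-valued with $\sum h \geq \delta^2 N/4$, to which Bloom's theorem applies, yielding a count $\gg \exp(-O_{a_i}(1/\delta^2)) N^{s-1}$ that transfers back to $f$ via the pointwise bound $f \geq g$. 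This argument, however, recovers only the weaker exponent $1/\delta^2$ in place of the claimed $1/\delta$.

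To sharpen the exponent, I would work directly on the Fourier side. Writing the count as
\[
\int_\T \prod_{i=1}^s \hat f(a_i \alpha) \, \mathrm{d}\alpha,
\]
the main-term contribution from a neighbourhood of $\alpha = 0$ is of order $(\sum f)^s/N \geq \delta^s N^{s-1}$, much larger than the target. The minor-arc contribution is bounded via H\"older and Plancherel by $\|\hat f \cdot 1_{\mathrm{minor}}\|_\infty^{s-2} \sum_n f(n)^2 \leq \|\hat f\|_{\infty,\mathrm{minor}}^{s-2} N$, which is acceptable provided $\|\hat f\|_{\infty,\mathrm{minor}} \ll \delta^{s/(s-2)} N$. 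If this smallness fails, a large Fourier coefficient $|\hat f(\alpha_0)| \geq \eta N$ (with $\eta \asymp \delta^{s/(s-2)}$) yields, by partitioning $I$ into short arithmetic progressions on which $e(-\alpha_0 \cdot)$ is nearly constant, a sub-progression $P \subset I$ on which $|P|^{-1}\sum_P f \geq \delta + \Omega(\eta)$. Iterated, this density increment drives the $L^1$-density of $f$ upwards; the process should terminate after $O_{a_i}(1/\delta)$ steps, at which stage the small-Fourier condition must hold and the main term dominates.

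The main obstacle is preserving the $L^2$-normalisation through the density-increment iteration. In the classical bounded setting the density cannot exceed $1$, forcing termination; here we must instead use the $L^2$-bound to cap the density, tracking $\sum_P f^2 / |P|$ so that it remains of order $1$ at every stage of the iteration. It is this bookkeeping --- rather than the Fourier or combinatorial mechanics --- that produces the sharp $\exp(-O_{a_i}(1/\delta))$ rate, and distinguishes it from the weaker $\exp(-O_{a_i}(1/\delta^2))$ of the naive reduction above.
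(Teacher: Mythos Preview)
Your ``naive approach'' is, in fact, the paper's proof. The paper takes the level set $A := \{x : f(x) \geq \delta/2\}$ and uses Cauchy--Schwarz with the $L^2$-bound to show $|A| \geq (\delta^2/4) N$; your truncation at height $2/\delta$ followed by rescaling achieves the same density $\asymp \delta^2$ for the object fed into Bloom's theorem. Either route then uses the pointwise minorisation $f \geq (\delta/2) 1_A$ (resp.\ $f \geq g$) to pass the count back to $f$.

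The gap is that you have misremembered the quantitative shape of Bloom's theorem. For translation-invariant equations in $s \geq 5$ variables, Bloom gives a count of
\[
\exp\bigl(-O_{a_i,\eps}\bigl(\rho^{-\frac{1}{s-2}-\eps}\bigr)\bigr) N^{s-1}
\]
for a set of density $\rho$, not $\exp(-O(1/\rho))N^{s-1}$. Plugging in $\rho \asymp \delta^2$ and using $s \geq 5$ (so $1/(s-2) \leq 1/3$) gives an exponent $\ll \delta^{-2/3 - 2\eps} \leq \delta^{-0.8}$ for small $\eps$, which is at most $1/\delta$. Hence the naive reduction already yields $\exp(-O_{a_i}(1/\delta)) N^{s-1}$, exactly as claimed in the lemma. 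Your assessment that it only gives $\exp(-O_{a_i}(1/\delta^2))$ would be correct for $s=3$ (Roth/Behrend regime) but not for $s \geq 5$.

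Consequently the entire Fourier density-increment programme you outline is unnecessary. It is also, as written, not a proof: you flag the key difficulty (preserving the $L^2$-normalisation through the iteration) but do not resolve it, and the step-count heuristic ``terminate after $O_{a_i}(1/\delta)$ steps'' is asserted rather than derived.
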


We deduce this from a theorem of Bloom \cite{BloomTranslation} in \S\ref{bloom section}.

\begin{lemma}[Counting lemma for bounded energy functions]\label{sidon counting lemma}
Let $s \geq 5$ and  $a_1, \dots, a_s \in \Z\setminus\set{0}$.  Let $\nu:  I \to [0, \infty)$ be a function defined on an interval $I \subset \Z$ of length $N$.  Suppose that 
\[
 \sum_n \nu(n) \leq N \quad \text{and} \quad E(\nu) \leq N^3.
\]
Then for any  $|f_i| \leq \nu $ we have
\[
\abs{\sum_{a_1x_1 + \dots + a_sx_s = 0} f_1(x_1) \dotsm f_s(x_s)} \leq N^{s-1}  \frac{\min_i\bignorm{\hat f_i}_{\infty}}{\bignorm{\hat1_{[N]}}_{\infty}}.
\]
\end{lemma}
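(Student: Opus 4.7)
The plan is to expand the counting sum on the Fourier side, isolate the factor with smallest $L^\infty$ Fourier mass, and control the remaining $s-1 \geq 4$ factors by combining H\"older's inequality with log-convex interpolation between the $L^4$ and $L^\infty$ norms.

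Orthogonality $\mathbf{1}_{k=0} = \int_\T e(\alpha k)\intd\alpha$ for $k \in \Z$ gives
\[
\sum_{a_1 x_1 + \dots + a_s x_s = 0} f_1(x_1)\dotsm f_s(x_s) = \int_\T \prod_{i=1}^{s} \hat f_i(a_i \alpha)\intd\alpha.
\]
Fixing $j$ with $\bignorm{\hat f_j}_\infty = \min_i \bignorm{\hat f_i}_\infty$, pull this factor out in $L^\infty$ and note that $\alpha\mapsto a_i\alpha$ preserves $L^p(\T)$ for nonzero integer $a_i$, so H\"older with exponent $s-1$ yields
\[
\biggabs{\int_\T \prod_{i} \hat f_i(a_i\alpha)\intd\alpha} \leq \bignorm{\hat f_j}_\infty \int_\T \prod_{i\neq j} \bigabs{\hat f_i(a_i\alpha)}\intd\alpha \leq \bignorm{\hat f_j}_\infty \prod_{i\neq j}\bignorm{\hat f_i}_{s-1}.
\]

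The next step is to bound each $\bignorm{\hat f_i}_{s-1}$ using the two hypotheses on $\nu$. The dominance $|f_i|\leq \nu$ gives immediately $\bignorm{\hat f_i}_\infty \leq \sum \nu \leq N$. Expanding $\bignorm{\hat f_i}_4^4 = \sum_{x+y=x'+y'} f_i(x) f_i(y)\overline{f_i(x') f_i(y')}$ and applying a triangle inequality termwise produces $\bignorm{\hat f_i}_4^4 \leq \bignorm{\hat\nu}_4^4 = E(\nu) \leq N^3$. Since $s \geq 5$, the exponent $s-1$ satisfies $s-1 \geq 4$, so log-convexity of $L^p$ norms gives
\[
\bignorm{\hat f_i}_{s-1} \leq \bignorm{\hat f_i}_4^{4/(s-1)}\bignorm{\hat f_i}_\infty^{1 - 4/(s-1)} \leq \bigbrac{N^{3/4}}^{4/(s-1)} N^{1-4/(s-1)} = N^{1-1/(s-1)}.
\]
Taking the product over the $s-1$ remaining indices contributes a total factor of $N^{s-2}$, so the whole expression is at most $\bignorm{\hat f_j}_\infty N^{s-2}$. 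Rewriting $N^{s-2} = N^{s-1}/\bignorm{\hat 1_{[N]}}_\infty$ puts this in the stated form.

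The argument is essentially routine Fourier-analytic bookkeeping, and I do not expect real obstacles. The only checks of substance are the triangle inequality producing $\bignorm{\hat f_i}_4^4 \leq E(\nu) \leq N^3$ and the observation that the hypothesis $s \geq 5$ is exactly what permits interpolating $L^{s-1}$ against $L^4$; it is the marginal regime where the argument closes, which is presumably why the same threshold appears in \cite{CFSZRegularity}. The denominator $\bignorm{\hat 1_{[N]}}_\infty$ in the stated bound is cosmetic, absorbing one factor of $N$ so that the conclusion compares $f_i$ directly against the characteristic function of the ambient interval.
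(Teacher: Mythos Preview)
Your proof is correct and essentially identical to the paper's. The only cosmetic difference is that where you invoke log-convexity of $L^p$ norms to interpolate $\bignorm{\hat f_i}_{s-1}$ between $L^4$ and $L^\infty$, the paper writes the equivalent pointwise estimate $\int_\T|\hat f(a\alpha)|^{s-1}\intd\alpha \leq \bigbrac{\sum_n |f(n)|}^{s-5}\int_\T|\hat f(a\alpha)|^4\intd\alpha$ directly; the remaining steps (orthogonality, pulling out the minimal $L^\infty$ factor, H\"older over the other $s-1$ factors, and the bounds $\sum|f_i|\leq N$ and $\bignorm{\hat f_i}_4^4\leq E(\nu)\leq N^3$) match exactly.
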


This is proved in \S\ref{counting section}.

\begin{lemma}[Dense model for almost-Sidon sets]\label{almost sidon model}
Let $0 \leq \eta \leq1$ and suppose that $S \subset [N]$ satisfies
\[
|S| \geq \delta N^{1/2} \quad \text{and} \quad E(S) \leq \brac{2 +\eta}|S|^2.
\]  
Then for any $0 < \eps \leq \min\set{\trecip{2}, \delta}$ there exists $f: (-\eps N, (1+\eps)N] \to [0, \infty)$ such that all of the following hold 
\begin{itemize}
\item $\displaystyle \sum_n f(n) = N^{1/2}|S|$;
\item $\displaystyle  \bignorm{\hat{f}-N^{1/2}\hat{1}_S}_\infty \leq \eps N$;
\item $\displaystyle \sum_n f(n)^2 \leq N\sqbrac{1 + \brac{\eta  + N^{-1/2}}(1-\eta)^{-1}\exp\brac{\eps^{-O(1)}}}$.
\end{itemize}
\end{lemma}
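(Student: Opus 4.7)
My plan is to take $f := N^{1/2}\,(1_S * \mu_B)$, where $\mu_B := |B|^{-1} 1_B$ is the uniform probability measure on a Bohr set $B \subset [-\eps N, \eps N]$ whose Fourier transform is close to $1$ on the large Fourier spectrum of $1_S$. The support condition $\supp f \subset (-\eps N, (1+\eps)N]$ and the mass identity $\sum_n f(n) = N^{1/2}|S|$ are then immediate from $\supp(1_S * \mu_B) \subset S + B$ and $\sum_n \mu_B(n) = 1$.

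For the Fourier estimate, the factorisation
\[
\hat f(\alpha) - N^{1/2}\hat 1_S(\alpha) = N^{1/2}\hat 1_S(\alpha)\bigl(\hat\mu_B(\alpha) - 1\bigr)
\]
and a split at $\Lambda := \{\alpha : |\hat 1_S(\alpha)| \geq \eps N^{1/2}/4\}$ reduce matters to forcing $|\hat\mu_B(\alpha) - 1| \leq \eps/4$ for all $\alpha \in \Lambda$ (on $\Lambda^c$ the trivial bound $|\hat\mu_B - 1| \leq 2$ suffices). The key analytic input is the $L^4$-estimate $\|\hat 1_S\|_4^4 = E(S) \leq (2+\eta)\|\hat 1_S\|_2^4$; on the basis of this, a Chang/Rudin-type theorem produces a dissociated $\Gamma \subset \T$ with $|\Gamma| = O(\eps^{-O(1)})$ (\emph{independent of $N$}) such that every $\alpha \in \Lambda$ is a $\pm 1$-combination of elements of $\Gamma$. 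Taking $B := B(\Gamma, \rho) \cap [-\eps N, \eps N]$ with $\rho \asymp \eps/|\Gamma|$ then yields $|\hat\mu_B(\alpha) - 1| \leq 2\pi|\Gamma|\rho \leq \eps/4$ on $\Lambda$ by the triangle inequality on $\pm 1$-combinations, while the standard Bohr-set volume bound gives $|B| \geq \exp(-\eps^{-O(1)})\,\eps N$.

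For the $L^2$ estimate, expand
\[
\sum_n f(n)^2 = N\,\|1_S * \mu_B\|_2^2 = N\sum_n r_S(n)\phi(n),
\]
where $\phi := \mu_B * \tilde\mu_B$ (with $\tilde\mu_B(n) := \mu_B(-n)$) is a probability measure satisfying $\|\phi\|_\infty = \phi(0) = 1/|B|$. Split the right-hand sum into $n = 0$, $D_1 := \{n \neq 0 : r_S(n) = 1\}$, and $D_{\geq 2} := \{n \neq 0 : r_S(n) \geq 2\}$: the $n=0$ term contributes $|S|/|B|$; the $D_1$ sum is at most $\sum_{n \neq 0}\phi(n) \leq 1 - 1/|B|$; and for $D_{\geq 2}$, the almost-Sidon identity
\[
\sum_{n \in D_{\geq 2}} r_S(n) \leq \sum_{n \neq 0} r_S(n)(r_S(n) - 1) = E(S) - 2|S|^2 + |S| \leq \eta|S|^2 + |S|
\]
combined with $\phi \leq 1/|B|$ gives $(\eta|S|^2 + |S|)/|B|$. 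Combining with $|B| \geq \exp(-\eps^{-O(1)})\eps N$ and $|S|^2 \leq (2+\eta)(2N+1)$ (from $E(S) \geq |S|^4/|S-S|$ and $|S-S| \leq 2N+1$) yields $\|1_S * \mu_B\|_2^2 \leq 1 + (2|S| + \eta|S|^2)/|B|$, which rearranges to the claimed bound (the $(1-\eta)^{-1}$ factor absorbs the refinement coming from the upper bound on $|S|^2$).

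The main obstacle is securing $|\Gamma| = O(\eps^{-O(1)})$ independently of $N$: one must apply a Chang-type inequality using the $L^4$-bound on $\hat 1_S$ coming from almost-Sidon-ness, rather than just $\|\hat 1_S\|_2$, since the latter would cost a factor of $\log|S| \asymp \log N$ in $|\Gamma|$, which would be fatal to the claimed $\eps$-dependence in $\exp(\eps^{-O(1)})$.
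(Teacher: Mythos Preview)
Your construction $f := N^{1/2}\,1_S * \mu_B$ and your treatment of the support, mass, Fourier, and $L^2$ estimates all match the paper's proof essentially line for line. The one place you diverge is precisely the step you flag as ``the main obstacle'': controlling the frequency set needed to build $B$ so that $|B| \geq \exp(-\eps^{-O(1)})N$.

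Here there is a genuine gap. You invoke a ``Chang/Rudin-type theorem using the $L^4$-bound'' to produce a dissociated $\Gamma$ of size $O(\eps^{-O(1)})$ whose $\pm 1$-span covers $\Lambda$. No such result is standard. The usual Chang lemma (in a discrete group) bounds a dissociated subset of the $\eps$-spectrum of $1_S$ by $O(\eps^{-2}\log(1/\alpha))$ with $\alpha = |S|/N \asymp N^{-1/2}$, giving a $\log N$ factor that destroys the claimed $\exp(\eps^{-O(1)})$ dependence. Replacing the $L^2$--$L^\infty$ H\"older step in Chang's proof by an $L^{4/3}$--$L^4$ pairing still leaves a polynomial-in-$N$ loss, and in any case $\Lambda \subset \T$ is an open set, so the statement ``every $\alpha \in \Lambda$ is a $\pm 1$-combination of elements of $\Gamma$'' does not even make literal sense without first discretising.

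The paper bypasses Chang entirely. It takes $B$ to be the Bohr set on the \emph{full} large spectrum $\Spec(S,\eps) = \{\alpha : |\hat 1_S(\alpha)| \geq \eps|S|\}$, then passes to a maximal $1/N$-separated subset $\alpha_1,\dots,\alpha_R$; since every point of $\Spec(S,\eps)$ lies within $1/N$ of some $\alpha_i$, the Bohr set with frequencies $\{\alpha_i\}$ and halved width is contained in $B$. The size bound on $R$ then comes from the \emph{large sieve} applied to the function $r_S$ (whose Fourier transform is $|\hat 1_S|^2$):
\[
R\,\eps^4|S|^4 \leq \sum_{i=1}^R |\hat 1_S(\alpha_i)|^4 \ll N\sum_n r_S(n)^2 = N\,E(S) \ll N|S|^2,
\]
whence $R \ll \delta^{-2}\eps^{-4} \leq \eps^{-6}$ (using $\eps \leq \delta$). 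This is exactly where the almost-Sidon hypothesis $E(S) \ll |S|^2$ enters, and it gives the $N$-independent bound directly, with no dissociativity or Rudin inequality required. Swapping your Chang step for this large-sieve argument repairs the proof.
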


The above constitutes the main idea in our approach and is proved in \S\ref{dense model section}.

\begin{proof}[Proof of Theorem \ref{dense almost sidon}]
We may assume that $\eta \leq 1/2$, for the second possible conclusion of the theorem is that $\eta$ is large. Let us apply Lemma \ref{almost sidon model},  with $\eps$ to be chosen.  This gives $f : (-\eps N, (1+\eps)N] \to [0, \infty)$ satisfying $\sum_n f(n) \geq \delta N$, $\bignorm{\hat{f}-N^{1/2}\hat{1}_S}_\infty \leq \eps  N$ and 
\begin{equation}\label{f l2 bound}
\sum_n f(n)^2 \leq N\sqbrac{1 + \brac{ \eta  +  N^{-1/2}} \exp\brac{\eps^{-O(1)}}}.
\end{equation}
By \eqref{f l2 bound}, either $\sum_n f(n)^2 \leq 2N$ or one of the following two possibilities holds
\begin{equation}\label{eta N bounds}
 \eta \geq \exp\brac{-\eps^{-O(1)}}\quad \text{or} \quad N \leq \exp\brac{\eps^{-O(1)}}.
\end{equation}
Notice that $(-\eps N, (1+\eps)N]$ is an interval of length at most $2N$.
 Hence, assuming that neither option in \eqref{eta N bounds} holds,  Lemma \ref{l2 bloom} gives that
\[
\sum_{a_1x_1 + \dots + a_sx_s = 0}f(x_1)\dotsm f(x_s) \geq \exp\brac{-O_{a_i}\brac{1/\delta}}N^{s-1}.
\]

Define $\nu := f + N^{1/2}1_S$.  We claim that, provided we divide through by a suitable absolute constant, the function $\nu$ satisfies the hypotheses of Lemma \ref{sidon counting lemma} on the interval $I = (-\eps N, (1+\eps)N]$. By the triangle inequality in $L^4$, and the Fourier-analytic interpretation of energy, we have
\begin{multline*}
E(\nu)^{1/4} = \norm{\hat{\nu}}_4 \leq \bignorm{\hat{f}}_4 + N^{1/2}\norm{\hat1_S}_4 \leq \norm{f}_1^{1/2}\bignorm{\hat{f}}^{1/2}_2 + N^{1/2} E(S)^{1/4}\\ \ll N^{1/4}\norm{f}_2+ N^{1/2} N^{1/4} \ll N^{3/4}.
\end{multline*}
Assuming that neither option in \eqref{eta N bounds} holds, we compare Fourier coefficients at zero to deduce that
\begin{multline*}
\sum_n \nu(n) =  \hat{f}(0)+N^{1/2}\hat{1}_{S}(0) 
\leq 2 \hat{f}(0) + \eps N\\ \leq 2(2N)^{1/2} \Bigbrac{\sum_n f(n)^2}^{1/2} + \eps N \ll N.
\end{multline*}

We may therefore apply Lemma \ref{sidon counting lemma} together with a telescoping identity to deduce that
\[
\abs{\sum_{a_1x_1 + \dots + a_sx_s = 0} \brac{\prod_i f(x_i) - \prod_i N^{1/2}1_S(x_i)}} \ll_s \eps  N^{s-1}.
\]
Hence either we deduce \eqref{solution lower bound}, or one of the following holds
\begin{itemize}
\item $\displaystyle
\eps \geq \exp\brac{-O_{a_i}\brac{1/\delta}}$;
\item $\displaystyle \eta \geq \exp\brac{-\eps^{-O(1)}}$;
\item $\displaystyle N \leq \exp\brac{\eps^{-O(1)}}$.
\end{itemize}
We obtain the result on taking $\eps$ sufficiently small to preclude the first possibility.
\end{proof}

\section{Results on dense sets of integers}\label{bloom section}

\begin{theorem}[Bloom \cite{BloomTranslation}]\label{bloom}
Let $a_1,\dots,a_s\in \Z \setminus\set{0}$ with $s \geq 5$ and $a_1 + \dots + a_s = 0$. Then for any $A \subset [N]$ with $|A|\geq \delta N$ we have the lower bound
\[
\sum_{a_1x_1 + \dots + a_sx_s = 0}1_A(x_1)\dotsm 1_A(x_s)\geq  \exp\brac{-O_{a_i,\eps}\brac{\delta^{-\recip{s-2} - \eps}}}N^{s-1}.
\]
\end{theorem}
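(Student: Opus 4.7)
The plan is a Fourier-analytic density increment argument, exploiting $s \geq 5$ through H\"older interpolation with an $L^2$ anchor. By orthogonality,
\[
T(A) := \sum_{a_1x_1 + \dots + a_sx_s = 0}\prod_i 1_A(x_i) = \int_\T \hat1_A(a_1\alpha)\dotsm \hat1_A(a_s\alpha)\intd\alpha.
\]
I would compare $T(A)$ to its expected value $c(a_i)\delta^s N^{s-1}$, obtained by substituting $\delta 1_{[N]}$ for $1_A$, where $c(a_i) > 0$ is the local density counting solutions inside $[N]$ itself. Writing $g := 1_A - \delta 1_{[N]}$ and expanding $\prod_i\bigbrac{\delta\hat 1_{[N]} + \hat g}(a_i\alpha)$, every error term involves at least one copy of $\hat g$. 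Applying H\"older with $s-2$ factors in $L^\infty$ and two in $L^2$ (the latter controlled by Parseval together with $\|g\|_2^2 \leq |A|$), one obtains
\[
\bigabs{T(A) - c(a_i)\delta^s N^{s-1}} \ll_{a_i} \bignorm{\hat g}_\infty^{s-2}\cdot N.
\]
The hypothesis $s\geq 5$ enters here: the exponent $s-2 \geq 3$ is large enough to convert a single $L^\infty$ Fourier bias into a genuine density gain.

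It follows that either $T(A) \geq \tfrac12 c(a_i)\delta^s N^{s-1}$ (already stronger than claimed, since $\delta^s \gg \exp\bigbrac{-\delta^{-1/(s-2)-\eps}}$ for small $\delta$), or there exists a nonzero $\alpha\in\T$ with $|\hat g(\alpha)|\gg_{a_i} \delta^{s/(s-2)}N$. In the latter case a standard Dirichlet-type localisation of this Fourier bias produces an arithmetic progression $P\subset[N]$ of length $N' \geq N\cdot\delta^{O(1/(s-2))}$ on which $A$ has relative density at least $\delta\bigbrac{1+c\delta^{2/(s-2)}}$ for some absolute $c>0$. Affinely rescaling $P$ to $[N']$, the hypotheses still hold with the new density (translation-invariance of the equation is what permits this substitution), so the argument iterates.

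The iteration terminates once the density exceeds $1$. The principal technical difficulty, and the reason for the $\eps$ in the theorem's exponent, is the quantitative bookkeeping across iterations: a naive progression-based argument loses a factor $\delta^{O(1/(s-2))}$ in $N$ per step, and the number of steps required to reach density $1$ is itself polynomial in $1/\delta$, so simple iteration yields a substantially worse exponent than $1/(s-2)$. Bloom's improvement comes from replacing arithmetic progressions by Bohr sets and invoking the Croot--Sisask almost-periodicity method in the style of Sanders, which keeps the Bohr dimension growing only polylogarithmically per density doubling while the width contracts in a controlled fashion. Careful tracking of these parameters across the full iteration, together with the use of a popular-difference variant to absorb the worst losses, produces the claimed lower bound $\exp\bigbrac{-O_\eps(\delta^{-1/(s-2)-\eps})}N^{s-1}$.
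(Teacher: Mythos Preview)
The paper does not prove this theorem at all: it is quoted verbatim from Bloom \cite{BloomTranslation} and invoked as a black box in the derivation of Lemma~\ref{l2 bloom}. There is therefore no proof in the present paper against which to compare your sketch.

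That said, your outline is broadly faithful to what Bloom actually does --- a density-increment iteration carried out on Bohr sets, with Sanders-style structural input responsible for pushing the exponent down towards $1/(s-2)$. One step in your sketch is not justified, however. You claim
\[
\bigabs{T(A) - c(a_i)\delta^s N^{s-1}} \ll_{a_i} \bignorm{\hat g}_\infty^{s-2}\cdot N
\]
via ``H\"older with $s-2$ factors in $L^\infty$ and two in $L^2$''. But the expansion of $\prod_i\bigbrac{\delta\hat 1_{[N]} + \hat g}(a_i\alpha)$ contains error terms with only one or two copies of $\hat g$, and these cannot contribute $s-2$ powers of $\|\hat g\|_\infty$. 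For instance the linear term
\[
\delta^{s-1}\int_\T \hat g(a_1\alpha)\prod_{i\geq 2}\hat 1_{[N]}(a_i\alpha)\,\intd\alpha
\]
is, by your H\"older scheme, bounded only by $\|\hat g\|_\infty\cdot \delta^{s-1}N^{s-2}$, which is of the same order as the main term and yields no saving. These low-order terms must be handled separately --- typically by exploiting $\hat g(0)=0$ together with the concentration of $\hat 1_{[N]}$ near the origin, or by passing to a cyclic group --- and once this is done the density increment and the iteration arithmetic are not quite as clean as the exponents you write down. The gap is repairable, but it is a genuine omission rather than a notational shortcut.
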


\begin{proof}[Proof of Lemma \ref{l2 bloom}]
Translating, we may assume that $I = [N]$. Define
$$
A := \set{x \in [N] : f(x) \geq \delta/2}.
$$
Then, employing the Cauchy--Schwarz inequality, we have
\begin{equation}\label{cauchy employment}
\begin{split}
\delta N \leq \sum_x f(x)  &= \sum_{x \notin A} f(x) + \sum_{x \in A} f(x)\\
&\leq \trecip{2}\delta N  + |A|^{1/2}\brac{\sum_x  f(x)^2}^{1/2}\\
& \leq \trecip{2}\delta N + \brac{|A|N}^{1/2}.
\end{split}
\end{equation}
Therefore
\begin{equation}\label{large value density}
|A| \geq \tfrac{\delta^2}{4} N.
\end{equation}

Applying Theorem \ref{bloom} we deduce that
\begin{align*}
\sum_{a_1x_1 + \dots + a_sx_s = 0} f(x_1) \dotsm f(x_s) & \geq (\delta/4)^s\sum_{a_1x_1 + \dots + a_sx_s = 0} 1_A(x_1) \dotsm 1_A(x_s)\\ & \gg \delta^s \exp\brac{-O_{a_i}\brac{\delta^{-0.8}}}N^{s-1}.\qedhere
\end{align*}
\end{proof}

\section{An almost-Sidon counting lemma}\label{counting section}

\begin{proof}[Proof of Lemma \ref{sidon counting lemma}]
For any finitely supported $f : \Z \to \C$ and $a \in \Z\setminus \set{0}$ we have
\begin{equation*}
\int_\T \bigabs{\hat{f}(a\alpha)}^{s-1} \intd\alpha \leq \Bigbrac{\sum_n |f(n)|}^{s-5} \int_\T \bigabs{\hat{f}(a\alpha)}^{4} \intd\alpha.
\end{equation*}
If $|f| \leq \nu$, then
\[
\sum_n |f(n)| \leq \sum_n \nu(n) \leq N.
\]
By orthogonality
\begin{multline*}
\int_\T \bigabs{\hat{f}(a\alpha)}^{4} \intd\alpha = \sum_{x-x' = y-y'} f(x)\overline{f(x')f(y)}f(y') \\
\leq \sum_{x-x' = y-y'} \nu(x)\nu(x')\nu(y)\nu(y') \leq N^3.
\end{multline*}
Therefore 
\[
\int_\T \bigabs{\hat{f}(a\alpha)}^{s-1} \intd\alpha \leq N^{s-2} .
\]

Again by orthogonality, together with H\"older's inequality
\begin{multline*}
\abs{\sum_{a_1x_1 + \dots + a_sx_s = 0} f_1(x_1) \dotsm f_s(x_s)} = \abs{\int_\T \hat{f}_1(a_1\alpha) \dotsm \hat{f}_s(a_s\alpha)\intd\alpha}\\ \leq  \bignorm{\hat{f}_i}_{\infty} \prod_{j \neq i} \brac{\int_\T \bigabs{\hat{f}(a\alpha)}^{s-1} \intd\alpha}^{\recip{s-1}} \leq \bignorm{\hat{f}_i}_{\infty} N^{s-2}.\qedhere
\end{multline*}
\end{proof}

\section{A modelling lemma for almost-Sidon sets}\label{dense model section}
We begin our proof of Lemma \ref{almost sidon model} with two subsidiary results on almost Sidon sets.
\begin{lemma}\label{large rep}
Let $S \subset [N]$ satisfy
\[
E(S):= \sum_{x-x' = y-y'}1_S(x)1_S(x')1_S(y)1_S(y') \leq (2 +\eta)|S|^2.
\]  
Then, on writing
\[
r_S(n) := \sum_{n_1 - n_2 = n} 1_S(n_1)1_S(n_2),
\]
we have
\[
\sum_{\substack{r_S(n) > 1\\ n \neq 0}} r_S(n) \leq \eta |S|^2 + |S|.  
\]
\end{lemma}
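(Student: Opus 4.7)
The plan is to work directly from the identity $E(S) = \sum_n r_S(n)^2$, which follows from orthogonality (and is already observed in the ``Local conventions'' of the introduction). I will isolate the trivial term $n=0$ and then exploit the elementary inequality $r^2 \geq 2r$ valid for all integers $r \geq 2$, which perfectly matches the restriction $r_S(n) > 1$ appearing in the sum we want to bound.

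Concretely, first I would note that $r_S(0) = |S|$ and $\sum_n r_S(n) = |S|^2$, so
\[
\sum_{n \neq 0} r_S(n) = |S|^2 - |S|.
\]
Separating the $n=0$ term in the energy, the hypothesis $E(S) \leq (2+\eta)|S|^2$ gives
\[
\sum_{n \neq 0} r_S(n)^2 \leq (1+\eta)|S|^2.
\]

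Next I would set $T := \{n \neq 0 : r_S(n) \geq 2\}$ and $A := \sum_{n \in T} r_S(n)$, which is exactly the quantity to be bounded. For $n \neq 0$ with $n \notin T$ we have $r_S(n) \in \{0,1\}$, hence $r_S(n)^2 = r_S(n)$, contributing $|S|^2 - |S| - A$ to $\sum_{n\neq 0} r_S(n)^2$. For $n \in T$ the bound $r_S(n)^2 \geq 2 r_S(n)$ gives a contribution of at least $2A$. Adding these and comparing to the energy bound,
\[
|S|^2 - |S| - A + 2A \leq \sum_{n \neq 0} r_S(n)^2 \leq (1+\eta)|S|^2,
\]
which rearranges to $A \leq \eta |S|^2 + |S|$, as required.

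There is no real obstacle: the argument is a one-line convexity/discrete inequality dressed up in the representation-function notation. The only thing to be careful about is correctly accounting for the $n=0$ term (which carries the mass $|S|^2$ that must be subtracted from $E(S)$) and ensuring the split between $r_S(n) = 1$ and $r_S(n) \geq 2$ is handled via the sharp inequality $r_S(n)^2 \geq 2 r_S(n)$ rather than a wasteful Cauchy--Schwarz estimate, which would not produce the clean constant $\eta$.
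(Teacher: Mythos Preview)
Your proof is correct and is essentially the same as the paper's: the paper writes the key step as $\sum_{r_S(n)>1,\,n\neq 0} r_S(n) \leq \sum_{n\neq 0} r_S(n)(r_S(n)-1)$, which is just your inequality $r^2 \geq 2r$ for $r\geq 2$ (together with $r(r-1)\geq 0$ for $r\in\{0,1\}$) rearranged, and then subtracts $\sum_{n\neq 0} r_S(n) = |S|^2-|S|$ from $\sum_{n\neq 0} r_S(n)^2 \leq (1+\eta)|S|^2$ exactly as you do.
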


\begin{proof}
We observe that
\begin{multline*}
\sum_{\substack{r_S(n) > 1\\ n\neq 0}} r_S(n) \leq \sum_{n\neq 0} r_S(n)(r_S(n) -1) = \sum_{n\neq 0} r_S(n)^2 - \sum_{n\neq 0} r_S(n) \\
\leq (1+\eta)|S|^2 - \brac{|S|^2-|S|} =\eta |S|^2 + |S|.\qedhere
\end{multline*}
\end{proof}

\begin{lemma}\label{almost sidon upper bound}
Let $\eta \in [0,1)$ and suppose that $S \subset [N]$ satisfies
\[
E(S):= \sum_{x-x' = y-y'}1_S(x)1_S(x')1_S(y)1_S(y') \leq (2 +\eta)|S|^2.
\]  
Then
\[
|S| \leq 2\brac{\tfrac{N}{1-\eta}}^{1/2}.
\]
\end{lemma}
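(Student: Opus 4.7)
The plan is to extract a set of ``unique differences'' from $S$ — non-zero integers $n$ with $r_S(n) = 1$ — whose cardinality we can bound both above (by the diameter of the difference set) and below (via the hypothesis on $E(S)$). Since differences of elements of $S \subset [N]$ lie in $[-(N-1),N-1]$, there are at most $2(N-1) \leq 2N$ non-zero differences in total, hence at most $2N$ unique ones.

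For the lower bound, I would partition the non-zero differences of $S$ according to whether $r_S(n) = 1$ or $r_S(n) > 1$. The total count $\sum_{n \neq 0} r_S(n)$ is exactly $|S|^2 - |S|$ (ordered pairs of distinct elements of $S$). By Lemma \ref{large rep}, the contribution from differences of multiplicity exceeding $1$ is at most $\eta |S|^2 + |S|$. Writing $D_1 := \{n \neq 0 : r_S(n) = 1\}$, this yields
\[
|D_1| \;\geq\; \bigbrac{|S|^2 - |S|} - \bigbrac{\eta|S|^2 + |S|} \;=\; (1-\eta)|S|^2 - 2|S|.
\]

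Combining with $|D_1| \leq 2N$ gives $(1-\eta)|S|^2 \leq 2N + 2|S|$. To convert this into the clean bound claimed, I would use the trivial inclusion $S \subset [N]$ to replace $2|S|$ by $2N$, obtaining
\[
(1-\eta)|S|^2 \;\leq\; 4N,
\]
from which $|S| \leq 2\bigbrac{N/(1-\eta)}^{1/2}$ follows immediately on taking square roots. Nothing here should pose a genuine obstacle; the only mildly delicate point is the handling of the linear-in-$|S|$ error term, for which the crude estimate $|S| \leq N$ is comfortably enough.
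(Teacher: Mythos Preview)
Your proof is correct and essentially identical to the paper's: both split $\sum_{n\neq 0} r_S(n) = |S|^2 - |S|$ according to whether $r_S(n) = 1$ or $r_S(n) > 1$, invoke Lemma \ref{large rep} for the latter sum, bound the former by $2N$, and arrive at $(1-\eta)|S|^2 \leq 2N + 2|S|$. The paper leaves the final step implicit, whereas you spell out the use of $|S| \leq N$ to absorb the linear term; otherwise the arguments coincide.
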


\begin{proof}
Using Lemma \ref{large rep} we have
\[
|S|^2 = \sum_{\substack{r_S(n) \leq 1\\ n \neq 0}} r_S(n)+ \sum_{\substack{r_S(n) > 1\\ n \neq 0}} r_S(n) + |S| \leq 2N + \eta|S|^2 + 2|S|.\qedhere
\]
\end{proof}

We are now in a position to prove Lemma \ref{almost sidon model} in earnest.

\begin{proof}[Proof of Lemma \ref{almost sidon model}]
Define the large spectrum of $S$ to be the set 
$$
\Spec(S, \eps ) := \set{\alpha \in \T : |\hat{1}_S(\alpha)| \geq \eps |S|}.
$$
Define the Bohr set 
\begin{equation}\label{bohr set defn}
B := \set{n \in [-\eps N, \eps N] : \norm{n\alpha}_\T \leq \eps\quad \forall \alpha \in \Spec(S, \eps )}.
\end{equation}
Write $\mu_B$ for the normalised characteristic function of $B$, so that
$$
\mu_B := |B|^{-1} 1_{B}.
$$
Then we define
\begin{equation}\label{g defn}
f :=  N^{1/2}1_S * \mu_B ,
\end{equation}
where, for finitely supported $f_i$, we set
$$
f_1 * f_2(n) := \sum_{m_1+m_2 = n} f_1(m_1) f_2(m_2).
$$

It is straightforward to check that $f$ is supported on $(-\eps N, (1+\eps)N]$ and that $\sum_n f(n) = N^{1/2}|S|$.  Let us next estimate $|N^{1/2}\hat{1}_S- \hat{f}|$.  The key identity is 
\[
\widehat{f_1*f_2} = \hat{f}_1\hat{f}_2,
\]
so that $|N^{1/2}\hat{1}_S- \hat{f}| = N^{1/2}|\hat{1}_S||1-\hat\mu_B|$.

If $\alpha \notin \Spec(S, \eps )$ then we have
$$
|N^{1/2}\hat{1}_S(\alpha)- \hat{f}(\alpha)| = N^{1/2}|\hat{1}_S(\alpha)||1 - \hat{\mu}_B(\alpha)| \leq 2N^{1/2} \eps |S|.
$$
If $\alpha \in \Spec(S, \eps )$, then for each $n \in B$ we have
$
e(\alpha n) = 1 + O(\eps).
$  
Hence
$
\hat{\mu}_B(\alpha) = 1 + O(\eps),
$
and consequently
$$
|N^{1/2}\hat{1}_S(\alpha)- \hat{f}(\alpha)| = N^{1/2}|\hat{1}_S(\alpha)||1 - \hat{\mu}_B(\alpha)| \ll N^{1/2} |S|\eps.
$$
Combining both cases and Lemma \ref{almost sidon upper bound} gives 
\begin{equation}\label{fourierestimate}
\bignorm{N^{1/2}\hat{1}_S - \hat{f}}_\infty \ll \eps N.
\end{equation}

We have 
\begin{equation}\label{hoped for L2}
\sum_n f(n)^2  = N|B|^{-2}\sum_{n_1 - n_2 = m_1 - m_2} 1_S(n_1)1_S(n_2)1_B(m_1)1_B(m_2) .
\end{equation}
Write
\[
r_S(n) := \sum_{n_1 - n_2 = n} 1_S(n_1)1_S(n_2).
\]
Then by Lemma \ref{large rep}, the inner sum in  \eqref{hoped for L2} is
\begin{multline*}
 \sum_n r_S(n)r_B(n) \leq |B|^2 + |B|\sum_{\substack{r_S(n) > 1\\ n \neq 0} } r_S(n)+ |B||S|\\
\leq |B|^2 + \brac{\eta |S|^2+2|S|}|B|. 
\end{multline*}
Using the estimate $|S| \ll (1-\eta)^{-1/2} N^{1/2}$ afforded by Lemma \ref{almost sidon upper bound}, it remains to establish the lower bound
\begin{equation}\label{bohr lower bound}
|B| \geq \exp\brac{-\eps^{O(1)}} N.
\end{equation}

Let $\alpha_1, \dots, \alpha_R$ be a maximal $(1/N)$-separated subset of $\Spec(S, \eps)$. Since every element of $\Spec(S, \eps)$ is within $1/N$ of some $\alpha_i$, one can check that
\begin{equation}\label{bohr inclusion}
B \supset \set{n \in [-\eps N/2, \eps N/2] : \norm{\alpha_i n}_\T \leq \eps/2\quad \forall i = 1,\dots, R}.
\end{equation}
Hence the argument proving the standard lower bound for  Bohr sets (e.g.\ \cite[Lemma 4.2]{TaoVuAdditive}) gives
\[
|B| \geq  \ceil{4/\eps}^{1+R} N.
\]

By the large sieve inequality (e.g.\ \cite[Lemma 5.3]{VaughanHardy}) we have
\[
R \eps^4 |S|^4 \leq \sum_{i=1}^R \bigabs{\hat{1}_S(\alpha_i)}^4 \ll N \sum_n r_S(n)^2 \ll N \brac{2+\eta} |S|^2.
\]
Hence  $R \ll  \delta^{-2}\eps^{-4}$. \end{proof}

\section{Proof of corollaries}\label{corollaries section}

\begin{proof}[Proof of Corollary \ref{dense almost sidon bound}]
Let us first obtain an upper bound for the number of solutions in $S$ to the equation
\begin{equation*}\label{main equation 2}
a_1x_1 + \dots + a_s x_s = 0.
\end{equation*}
 By our hypotheses, all such solutions should have $x_i = x_j$ for some $i \neq j$.  At the cost of a factor of $\binom{s}{2}$, we may assume that $x_{s-1}=x_s$. Writing $n:= a_{4}x_{4} +\dots + a_sx_s$, the number of choices for the remaining three variables is at most 
 \begin{multline*}
 \sum_{a_1x_1 + a_2x_2 + a_{3}x_{3} = -n} 1_S(x_1)1_S(x_2)1_S(x_3)  = \int_\T e(\alpha n)\prod_{i=1}^3\hat{1}_S(a_ix_i)  \intd\alpha\\ \leq \prod_{i=1}^{3} \brac{\int_\T \bigabs{\hat{1}_S(a_i\alpha_i)}^{3}\intd\alpha}^{\frac{1}{3}} \leq \prod_{i=1}^{3} \brac{\int_\T \bigabs{\hat{1}_S(a_i\alpha_i)}^{4}\intd\alpha}^{\frac{1}{4}}
 = E(S)^{\frac{3}{4}}.
 \end{multline*}
 We may assume that $\eta \leq 1/2$ (otherwise we are done). Using Lemma \ref{almost sidon upper bound} we deduce that the number of choices for $x_1, x_2, x_3$ is $O(N^{3/4})$. Since there are $N^{\frac{s-4}{2}}$ choices for the remaining variables, we deduce that
 \[
 \sum_{a_1x_1 + \dots + a_s x_s = 0} \prod_i 1_S(x_i) \ll_s  N^{\frac{s}{2} -  \frac{5}{4}}.
 \]
 We deduce the result on comparing this with the lower bound given in Theorem \ref{dense almost sidon}
 \end{proof}
 
 We obtain Corollary \ref{dense sidon bound} from Corollary \ref{dense almost sidon bound} since, for a Sidon set $S$, we have $E(S) \leq (2+\eta)|S|$ with $\eta = 0$.

  \bibliographystyle{alphaabbr}
 \bibliography{/Users/sean/Dropbox/SeanBib} 

\end{document}